\newtheorem{theorem}{Theorem}
\newtheorem{lemma}[theorem]{Lemma}
\newtheorem{case}{Case}
\title{Concerning Kurosaki's squarefree word}
\author{Serina Camungol and Narad Rampersad\\
Department of Mathematics and Statistics, University of Winnipeg\\
515 Portage Ave., Winnipeg, Manitoba, R3B 2E9 Canada\\
\texttt{\{serina.camungol, narad.rampersad\}@gmail.com}}
\begin{document}
\maketitle
\begin{abstract}
In 2008, Kurosaki gave a new construction of a (bi-)infinite
squarefree word over three letters.  We show that in fact Kurosaki's
word avoids $7/4^+$-powers, which, as shown by Dejean, is optimal over
a $3$-letter alphabet.
\end{abstract}

\section{Introduction}
In 1906, Thue \cite{Thu06} constructed an infinite word over a
$3$-letter alphabet that avoided \emph{squares}, i.e., repetitions of
the form $xx$.  In 1935, Arshon \cite{Ars35,Ars37} independently
rediscovered this and gave another construction of an infinite
squarefree word.  In 1972, Dejean \cite{Dej72} generalized Thue's
notion of repetition to \emph{fractional powers}.  She constructed an
infinite ternary word avoiding $7/4^+$-powers, i.e., repetitions of
the form $x^\alpha$, where $\alpha > 7/4$.  Furthermore, she showed
that this was optimal over a ternary alphabet.  In 2001, Klepinin and
Sukhanov \cite{KS01} examined Arshon's word in more depth and showed
that it also avoided $7/4^+$-powers.  In 2008, Kurosaki \cite{Kur08}
gave a new construction of a squarefree word.  In this short note, we
show that Kurosaki's word also avoids $7/4^+$-powers.

Let us now recall some basic definitions.  Let $\Sigma$ be a finite
alphabet and let $x$ be a word over $\Sigma$.  We denote the length of
$x$ by $|x|$.  Write $x = x_1x_2\cdots x_n$, where each $x_i \in
\Sigma$.  The word $x$ has a \emph{period} $p$ if $x_i = x_{i+p}$ for
all $i$.  The \emph{exponent} of $x$ is the quantity $|x|/p$, where
$p$ is the least period of $x$.  If $x$ has exponent $\alpha$, we say
that $x$ is an \emph{$\alpha$-power}.  A $2$-power is also called a
\emph{square}.  A word $y$ (finite or infinite) \emph{avoids
  $\alpha$-powers} (resp.~\emph{avoids $\alpha^+$-powers}) if every
factor of $y$ has exponent less than (resp.\ at most) $\alpha$.

\section{Kurosaki's word avoids $7/4^+$-powers}

We now give the definition of Kurosaki's word.  We will use the notation from his paper.
We define functions $\sigma, \rho : \{1,2,3\}^* \to \{1,2,3\}^*$ as
follows:  If $a$ is a word over $\{1,2,3\}$, then
\begin{itemize}
  \item $\sigma(a)$ is the word obtained by exchanging 1's and 2's in $a$;
  \item $\rho(a)$ is the word obtained by exchanging 2's and 3's in $a$.
\end{itemize}
We define the function $\varphi : \{ 1, 2 ,3 \}^{*} \rightarrow \{ 1,2,3 \}^{*}$ as follows:
$$ \varphi(a) = \sigma(a)a\rho(a) $$

Consider the finite words $\varphi^n(2)$, for $n =1,2,\ldots$.  These
words do not converge to a one-sided infinite word as $n$ tends to
infinity; however, since $\varphi^n(2)$ appears as the middle third of
$\varphi^{n+1}(2)$ for all $n$, we see that this sequence of words
defines, in the limit, a two-sided, or bi-infinite, word.  It is this
bi-infinite word
\[
\cdots 123213231\,213123132\,312132123 \cdots
\]
that we refer to as \emph{Kurosaki's word}.

Following Kurosaki, we write $\varphi^n(2) = \{ a^{n}_{i}
\}_{i=0}^{3^{n}-1}$.  Proofs of Theorem~\ref{squarefree},
Lemma~\ref{permutation} and Lemma~\ref{preimage} can be found in
\cite{Kur08}.

\begin{theorem}
\label{squarefree}
$\{ a^{n}_{i} \}_{i=0}^{3^{n}-1}$ is squarefree for all $n \geq 1$.
\end{theorem}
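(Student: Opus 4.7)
The plan is to proceed by induction on $n$. The base case is immediate: $\varphi^1(2) = \sigma(2)\cdot 2\cdot \rho(2) = 1\cdot 2\cdot 3$, which is squarefree. For the inductive step, set $w = \varphi^n(2)$ and assume $w$ is squarefree; we must show that $\varphi^{n+1}(2) = \sigma(w)\,w\,\rho(w)$ is squarefree. Since $\sigma$ and $\rho$ are letter-permutations, each of the three blocks $\sigma(w)$, $w$, $\rho(w)$ is individually squarefree. Hence any hypothetical square $xx$ in $\varphi^{n+1}(2)$ must properly cross at least one of the two internal block boundaries.

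I would partition the analysis according to how $xx$ sits across the boundaries. Case (i): $xx$ is a factor of $\sigma(w)\,w$ crossing only the left boundary. Case (ii): symmetrically, $xx$ is a factor of $w\,\rho(w)$ crossing only the right boundary. Case (iii): $xx$ spans all three blocks, which forces $|x| > |w|/2 = 3^n/2$, so $xx$ has length greater than $|w|$ and covers the entire middle block.

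The essential observation for Cases (i) and (ii) is that $\sigma$ fixes the letter $3$ while $\rho$ fixes the letter $1$. Thus the positions of $3$'s in $\sigma(w)$ exactly coincide with the positions of $3$'s in $w$, and the positions of $1$'s in $\rho(w)$ coincide with those in $w$. Given a square $xx$ crossing, say, the left boundary, the equality of the two halves of $xx$ transports the pattern of $3$'s across the boundary; combined with the fact that $\sigma$ acts nontrivially on the letters $1$ and $2$ that appear at the other positions, this mismatch should force a shorter square strictly inside $w$, contradicting the inductive hypothesis. Case (ii) is analogous with $1$ in place of $3$.

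Case (iii) is the principal obstacle. Here $xx$ has period $|x|$ and length exceeding $|w|$, so the middle block $w$ itself is a factor of $xx$; this yields a periodicity condition on $w$ together with prescribed boundary conditions on how $\sigma(w)$ and $\rho(w)$ extend $w$ on the left and right. Comparing these extensions position by position, and exploiting that $\sigma \ne \rho$ already on the letter $2$ (which is the central letter of $\varphi^n(2)$ for every $n$), should again produce an incompatibility with the inductive hypothesis, closing the induction. The delicate point throughout is keeping careful track of which positions of $w$ carry which letter and how the period $|x|$ interacts with the triple-block structure; this bookkeeping, rather than any single clever lemma, is where the real work lies.
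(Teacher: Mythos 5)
Your reduction is sound only up to the point where the real difficulty begins, and there it stops. The base case, the observation that $\sigma(w)$, $w$, $\rho(w)$ are individually squarefree, and the conclusion that a hypothetical square must cross a block boundary are all correct. But a concatenation of squarefree words can perfectly well contain a square (e.g.\ $121\cdot 212$ contains $1212$), so ``a square crossing the boundary forces a shorter square strictly inside $w$'' is precisely the statement that needs a mechanism, not a consequence of general principles. Your positions-of-$3$'s idea, pushed through, only says that for indices $j$ in the overlap one has $\sigma(w_j)=w_{j+|x|-3^n}$, hence that the set of positions of $3$'s in $w$ satisfies a local periodicity relation; you never show how this contradicts squarefreeness of $w$, and it is not clear that it does without further structural input. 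The phrases ``should force'' and ``should again produce an incompatibility'' occur exactly at the two places where all the work lies, so as written this is a plan rather than a proof; Case (iii) has the same character (and there, when $|x|\ge 3^n$, even the claimed periodicity of $w$ does not follow directly).

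For comparison: the paper does not reprove this theorem at all; it cites Kurosaki \cite{Kur08}, and Kurosaki's argument (whose machinery the paper reuses for Theorem~\ref{7/4powerfree}) rests on a synchronization structure that your sketch never develops: every factor $a_{3k}a_{3k+1}a_{3k+2}$ at a position divisible by $3$ is a permutation of $123$ (Lemma~\ref{permutation}), and the middle letters of these triples spell out $\varphi^{n-1}(2)$ (Lemma~\ref{preimage}). With these facts one aligns a hypothetical repetition so that its starting position and period are divisible by $3$ and then projects it via $f$ to a repetition in the previous iterate, which is what closes the induction. Your block-boundary decomposition is a genuinely different (morphism-style) route, but to make it work you would still need some concrete alignment or projection lemma of this kind; the letter-fixing properties of $\sigma$ and $\rho$ alone do not supply it.
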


\begin{lemma}
\label{permutation}
Each triple of the form $a^{n}_{3k}a^{n}_{3k+1}a^{n}_{3k+2}$, $k = 0,
1, \ldots, 3^{n-1}-1$, is some permutation of $123$.
\end{lemma}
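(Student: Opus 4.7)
The plan is to prove this by induction on $n$. The base case $n=1$ is immediate since $\varphi(2) = \sigma(2)\cdot 2\cdot\rho(2) = 123$, which is already a permutation of $123$.

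For the inductive step, I would rely on two simple observations. First, $\sigma$ and $\rho$ are letter-to-letter substitutions induced by permutations of $\{1,2,3\}$, so they commute with the block decomposition: if $a = t_1 t_2 \cdots t_m$ is written as a concatenation of length-$3$ blocks, then $\sigma(a) = \sigma(t_1)\sigma(t_2)\cdots\sigma(t_m)$ and $\rho(a) = \rho(t_1)\rho(t_2)\cdots\rho(t_m)$. Moreover, since $\sigma$ and $\rho$ are permutations of the alphabet, they send any permutation of $123$ to a permutation of $123$. Second, $|\varphi^n(2)| = 3^n$ is divisible by $3$ for $n \geq 1$, so when we concatenate three blocks of length $3^n$ to form $\varphi^{n+1}(2) = \sigma(\varphi^n(2))\,\varphi^n(2)\,\rho(\varphi^n(2))$, the boundaries between the three blocks fall on multiples of $3$. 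Hence the triple decomposition of $\varphi^{n+1}(2)$ into consecutive length-$3$ factors is precisely the concatenation of the triple decompositions of $\sigma(\varphi^n(2))$, $\varphi^n(2)$, and $\rho(\varphi^n(2))$.

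Combining these, assume inductively that $\varphi^n(2) = t_1 t_2 \cdots t_{3^{n-1}}$ with each $t_k$ a permutation of $123$. Then each of $\sigma(\varphi^n(2))$ and $\rho(\varphi^n(2))$ decomposes as a concatenation of $3^{n-1}$ triples, each the image of some $t_k$ under $\sigma$ or $\rho$, hence again a permutation of $123$. The alignment observation above ensures that these are exactly the triples $a^{n+1}_{3k}a^{n+1}_{3k+1}a^{n+1}_{3k+2}$ for $k = 0, 1, \ldots, 3^n - 1$, completing the inductive step.

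I do not expect any real obstacle here; the only subtlety is the alignment check, which is where the divisibility $3 \mid |\varphi^n(2)|$ is used. Without that, the three images could a priori induce triples that straddle the block boundaries, and the argument would fail.
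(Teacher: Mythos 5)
Your proof is correct. Note that the paper itself does not prove Lemma~\ref{permutation} at all: it simply cites Kurosaki's original article for Theorem~\ref{squarefree}, Lemma~\ref{permutation}, and Lemma~\ref{preimage}, so there is no in-paper argument to compare against. Your induction is the natural self-contained argument: the base case $\varphi(2)=123$ is immediate, and the inductive step uses exactly the two facts that matter, namely that $\sigma$ and $\rho$ are letter-to-letter maps induced by permutations of the alphabet (so they preserve both the length-$3$ block decomposition and the property of being a permutation of $123$), and that $|\varphi^n(2)|=3^n$ is divisible by $3$, so the block boundaries in $\varphi^{n+1}(2)=\sigma(\varphi^n(2))\,\varphi^n(2)\,\rho(\varphi^n(2))$ align with positions congruent to $0$ modulo $3$. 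You correctly flag the alignment check as the only subtlety; without it the triples could straddle the seams between the three pieces. In short, your proposal fills in a proof the paper omits, and it does so correctly.
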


From now on, when we use the term \emph{triple} we mean a permutation
of $123$ occurring at a position congruent to $0$ modulo $3$ in
$\varphi^n(2)$.  Next we define the operator $f$, which extracts the
middle term of each triple: $f(\varphi^{n}(2)) = \{
a^{n}_{3i+1}\}^{3^{n-1}-1}_{i=0}$.

\begin{lemma}
\label{preimage}
The sequence $f(\varphi^{n}(2)) = \{ a^{n}_{3i+1} \}^{3^{n-1}-1}_{i=0}$  is equal to $\varphi^{n-1}(2) = \{ a^{n-1}_{i} \}^{3^{n-1}-1}_{i=0}$.
\end{lemma}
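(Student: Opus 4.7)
I would prove this by induction on $n$. The base case $n = 1$ is a direct check: $\varphi(2) = 123$, a single triple whose middle letter is $2 = \varphi^0(2)$.

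For the inductive step, assume $n \geq 2$ and that the lemma holds for $n - 1$. The definition gives $\varphi^n(2) = \sigma(\varphi^{n-1}(2)) \cdot \varphi^{n-1}(2) \cdot \rho(\varphi^{n-1}(2))$, a concatenation of three blocks each of length $3^{n-1}$. Since $n \geq 2$, each block length is divisible by $3$, so no triple of $\varphi^n(2)$ straddles a block boundary; the triples of $\varphi^n(2)$ are exactly the triples inside the three blocks, read in order. Because $\sigma$ and $\rho$ are letterwise substitutions, the middle of the $k$-th triple of $\sigma(\varphi^{n-1}(2))$ is $\sigma$ applied to the middle of the $k$-th triple of $\varphi^{n-1}(2)$, and similarly for $\rho$.

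Writing $w := \varphi^{n-2}(2)$, the induction hypothesis asserts $f(\varphi^{n-1}(2)) = w$, so collecting the middles from the three blocks and concatenating yields
\[
f(\varphi^n(2)) = \sigma(w) \cdot w \cdot \rho(w) = \varphi(w) = \varphi^{n-1}(2),
\]
as required.

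The only subtle point is that the block decomposition aligns with the triple structure only once $n \geq 2$; for $n = 1$ the blocks have length $1$, which is why that case must be handled separately. Once alignment is established, the observation that $\sigma$ and $\rho$ commute letterwise with the middle-extraction operator $f$ makes the remainder of the argument essentially automatic, so I do not anticipate a serious obstacle.
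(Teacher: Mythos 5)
Your proof is correct: the base case checks out, and for $n\geq 2$ the block decomposition $\varphi^n(2)=\sigma(\varphi^{n-1}(2))\,\varphi^{n-1}(2)\,\rho(\varphi^{n-1}(2))$ into blocks of length $3^{n-1}$ (divisible by $3$) aligns with the triple structure, while the letter-to-letter maps $\sigma$ and $\rho$ commute with the middle-extraction operator $f$, so the induction goes through. Note that the paper itself does not prove this lemma but cites Kurosaki \cite{Kur08}; your inductive argument is the natural one and matches the structural reasoning that proof relies on, so there is nothing to fix.
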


\begin{lemma}
\label{4tuple}
$\{ a^{n}_{i} \}_{i=0}^{3^{n}-1}$ has no factors of the form
$a_{3q}a_{3q+1} \cdots a_{3q+11}$ such that $a_{3q}=a_{3q+3}=a_{3q+6}=a_{3q+9}$.
\end{lemma}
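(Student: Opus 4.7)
The plan is to combine Lemma~\ref{permutation}, Lemma~\ref{preimage}, and Theorem~\ref{squarefree} to reduce the claim to the well-known fact that every binary word of length $4$ contains a square.

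First I would suppose, for contradiction, that there exist four consecutive triples $T_0, T_1, T_2, T_3$ (starting at positions $3q, 3q+3, 3q+6, 3q+9$) whose first letters all coincide; write $b$ for this common first letter, and let $c_k := a^n_{3(q+k)+1}$ denote the middle letter of $T_k$ for $k=0,1,2,3$. By Lemma~\ref{permutation}, each $T_k$ is a permutation of $123$, so the middle letter $c_k$ is one of the two letters of $\{1,2,3\} \setminus \{b\}$. In particular, the word $c_0 c_1 c_2 c_3$ is a length-$4$ word over a $2$-letter alphabet.

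Next I would invoke Lemma~\ref{preimage}: the sequence of middle letters of the triples in $\varphi^n(2)$ is exactly $\varphi^{n-1}(2)$, so $c_0 c_1 c_2 c_3$ is a factor of $\varphi^{n-1}(2)$. By Theorem~\ref{squarefree}, $\varphi^{n-1}(2)$ is squarefree, so this factor must be squarefree as well.

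To finish I would note the standard fact that over any two-letter alphabet the longest squarefree words have length $3$ (e.g.\ $232$ and $323$ over $\{2,3\}$); any extension to length $4$ creates a square. This contradicts the squarefreeness of $c_0 c_1 c_2 c_3$, completing the proof. There is no real obstacle here: the only thing to watch is that the hypothesis $a_{3q}=a_{3q+3}=a_{3q+6}=a_{3q+9}$ together with Lemma~\ref{permutation} genuinely forces the middle-letter sequence into a two-letter sub-alphabet, which is what makes the binary-squarefreeness argument apply.
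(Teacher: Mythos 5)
Your proposal is correct, but it takes a different route from the paper. The paper argues entirely at level $n$: since the four triples $T_0,\dots,T_3$ all begin with the same letter $b$ and each is a permutation of $123$ (Lemma~\ref{permutation}), each $T_k$ is one of only two possible words, so either two consecutive triples coincide (giving a square of length $6$) or they alternate (giving a square of length $12$), contradicting Theorem~\ref{squarefree} applied directly to $\varphi^n(2)$; Lemma~\ref{preimage} is not used there at all. You instead observe that the middle letters $c_0c_1c_2c_3$ avoid $b$, hence form a binary word of length $4$, which must contain a square, and you push this square down to $\varphi^{n-1}(2)$ via Lemma~\ref{preimage}, contradicting squarefreeness one level lower. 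Both arguments hinge on the same combinatorial fact (the middle letters live in a two-letter alphabet), but yours trades the paper's small case analysis at level $n$ for the standard binary-squarefreeness fact plus the projection lemma; the paper's version is self-contained at level $n$, while yours needs $\varphi^{n-1}(2)$ to be defined and squarefree, i.e.\ $n\geq 2$ --- a harmless point, since a factor of length $12$ already forces $n\geq 3$, but worth stating for completeness.
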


\begin{proof}
For convenience we omit the superscript $n$ from the terms
$a^n_i$.  Let $a_{3q}a_{3q+1}\cdots a_{3q+11}$ be a factor of $\{
a^{n}_{i} \}_{i=0}^{3^{n}-1}$.  As $a_{3q}a_{3q+1}a_{3q+2}$,
$a_{3q+3}a_{3q+4}a_{3q+5}$, $a_{3q+6}a_{3q+7}a_{3q+8}$ and
$a_{3q+9}a_{3q+10}a_{3q+11}$ are permutations of 123
(Lemma~\ref{permutation}), it is not difficult to see that having
$a_{3q}=a_{3q+3}=a_{3q+6}=a_{3q+9}$ creates a square of length 6, 9
or 12, in contradiction to Theorem~\ref{squarefree}.
\end{proof}

We now establish the main result.

\begin{theorem}
\label{7/4powerfree}
$\{ a^n_{i} \}_{i=0}^{3^{n}-1}$ is $7/4^{+}$-power-free for all $n
\geq 1$.
\end{theorem}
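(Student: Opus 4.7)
The plan is to argue by induction on $n$, with Lemma~\ref{preimage} as the crucial desubstitution tool. For small $n$, the word $\varphi^n(2)$ is short enough that the claim can be verified directly, so assume it holds for $\varphi^{n-1}(2)$. Suppose for contradiction that $\varphi^n(2)$ contains a factor $w$ of period $p$ and length $L$ with $L > 7p/4$, and write $w = a_i a_{i+1}\cdots a_{i+L-1}$.

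First I would split on whether $3 \mid p$. When $p = 3p'$, a shift by $p$ in $\varphi^n(2)$ restricts to a shift by $p'$ on the sequence of middle letters of triples. By Lemma~\ref{preimage} this middle-letter sequence is exactly $\varphi^{n-1}(2)$, so after accounting for the at most two partial triples at the endpoints of $w$, one obtains a factor of $\varphi^{n-1}(2)$ of period $p'$ whose exponent still exceeds $7/4$, contradicting the inductive hypothesis.

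When $3 \nmid p$, there is no direct desubstitution. The shift by $p$ permutes the three residue classes modulo $3$ nontrivially, so the periodicity equations $a_j = a_{j+p}$ force letters at different positions within triples to coincide. Combining these equations with Lemma~\ref{permutation} (each triple is a permutation of $123$), one should be able to propagate the constraints across several consecutive triples and derive either a short square, contradicting Theorem~\ref{squarefree}, or the forbidden configuration $a_{3q}=a_{3q+3}=a_{3q+6}=a_{3q+9}$ ruled out by Lemma~\ref{4tuple}. Very small values of $p$ (say $p \le 3$) would be dispatched separately, since they force adjacent equal letters inside a single triple, which is already impossible.

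The main obstacle I expect is the case $3 \nmid p$: there is no clean inductive reduction, and the argument has to thread the periodicity constraints through the triple structure by hand. The precise role of the bound $L > 7p/4$ is to guarantee enough consecutive triples inside $w$ for Lemma~\ref{4tuple} to bite, and verifying that $7/4$ is the exact threshold at which the triple-structure constraints become unsatisfiable — rather than some weaker bound — is where the delicate bookkeeping lies.
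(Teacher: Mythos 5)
Your overall skeleton (induction on $n$, desubstitution via Lemma~\ref{preimage} when the period is divisible by $3$, propagation through the triple structure toward Lemma~\ref{4tuple} otherwise) matches the paper, but both halves have a genuine gap. In the case $3\mid p$, your plan to discard ``the at most two partial triples at the endpoints'' and claim the extracted factor still has exponent exceeding $7/4$ is false at the threshold: if, say, $p=12$ and $|w|=22$ (exponent $11/6>7/4$), truncating to whole triples can leave an extracted factor of length $7$ with period $4$, i.e.\ exponent exactly $7/4$, which the induction hypothesis does not exclude, since only exponents strictly greater than $7/4$ are forbidden. The paper's key move is the opposite of truncation: it \emph{extends} the repetition leftward and rightward, keeping the same period $s$, until it starts at a position $\equiv 0$ and ends at a position $\equiv 2 \pmod 3$; this extension is itself nontrivial and uses Lemma~\ref{permutation} to reconstruct the missing letter when $p\equiv 1\pmod 3$, and Lemma~\ref{permutation} together with squarefreeness (Theorem~\ref{squarefree}) when $p\equiv 2\pmod 3$. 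Only after extension does the middle-letter extraction give exponent $\geq$ the original one, so this reconstruction step cannot be skipped.

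In the case $3\nmid p$, the propagation you describe does reach the configuration $a_{3q}=a_{3q+3}=a_{3q+6}=a_{3q+9}$ forbidden by Lemma~\ref{4tuple}, but only when the overlap $r>3p/4$ is long enough (the paper needs $r\geq 11$, hence $p\geq 14$). Your fallback for smaller periods is inadequate: dispatching only $p\leq 3$ via adjacent equal letters ignores the periods $p\in\{5,7,8,10,11,13\}$, for which a $7/4^+$-power need not contain a square (e.g.\ period $13$ forces length only $23<26$) and the chain of forced equalities is too short for Lemma~\ref{4tuple}. You would have to supply a separate argument for each of these; the paper instead uses a different mechanism you do not mention: since $\varphi^{n+1}(2)=\sigma(\varphi^{n}(2))\varphi^{n}(2)\rho(\varphi^{n}(2))$ and each block is $7/4^+$-power-free by induction (and invariance under the letter permutations $\sigma,\rho$), a short-period repetition must cross one of the two block boundaries, and a finite inspection of the bounded-length factors (length at most $23$) around those boundaries, whose neighborhoods are explicitly known prefixes/suffixes depending on the parity of $n$, finishes the case. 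Without either that boundary check or a completed case analysis for the intermediate periods, the proof is incomplete.
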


\begin{proof}
We will prove that $\varphi^{n}(2) = \{ a^n_{i} \}_{i=0}^{3^{n}-1}$
does not contain a $7/4^{+}$-power of the form $xyx$, where $|x| +
|y| = s$, $|x| = r$ and $r/s>3/4$.  We prove this by induction on
$n$, considering various cases for the possible lengths of $s$.  We
first check the base cases $n = 1, 2, 3$.  Clearly
$\varphi^{1}(2)=123$ and $\varphi^{2}(2)=213 123 132$ avoid
$7/4^{+}$-powers, and by observation $\varphi^{3}(2)=123 213 231 213
123 132 312 132 123$ avoids $7/4^{+}$-powers.  So we assume
$\varphi^{n}(2)$ avoids $7/4^{+}$-powers and we show that
$\varphi^{n+1}(2)$ is $7/4^{+}$-power-free.

\begin{case}
$s \equiv 0\pmod{3}$
\end{case}
Suppose towards a contradiction that $\varphi^{n+1}(2)$ contains a
$7/4^{+}$-power of the form $xyx$, where $|x| + |y| = s$, $|x| = r$,
$r/s>3/4$ and $s \equiv 0\pmod 3$. Let us write $$xyx = a_{p}a_{p+1}
\cdots a_{p+s-1}a_{p+s}a_{p+s+1} \cdots a_{p+s+r-1},$$ where $a_{p+i}
= a_{p+s+i}$ for $0 \leq i \leq r-1$.  Again, for convenience we omit
the superscript $n$ from the terms $a^n_i$.  If $p \equiv r \equiv 0
\pmod 3$, then $f(\varphi^{n+1}(2))$ contains a $7/4^{+}$-power and by
Lemma~\ref{preimage} the word $\varphi^{n}(2)$ contains a
$7/4^{+}$-power, a contradiction to our assumption.

We will show that the repetition $xyx$ can always be extended to the
left to give a $7/4^+$-power that has the same period $s$, but now starts
at a position congruent to $0$ modulo $3$.  A similar argument shows
that the repetition can also be extended to the right so that
its ending position is congruent to $2$ modulo $3$, and therefore that
the argument of the previous paragraph can be applied to obtain a
contradiction.

If $p \equiv 1 \pmod3$, let $p=3q+1$. The first two terms of $x$
form a suffix of the triple $a_{3q}a_{3q+1}a_{3q+2}$.
Lemma~\ref{permutation} allows us to uniquely determine $a_{3q}$ from
$a_{3q+1}$ and $a_{3q+2}$.  Similarly, the values of $a_{3q+s+1}$ and
$a_{3q+s+2}$ uniquely determine the value of $a_{3q+s}$.  Since
$a_{3q+1}a_{3q+2} = a_{3q+s+1}a_{3q+s+2}$, we have $a_{3q} =
a_{3q+s}$.  Thus $a_{p-1}a_p \cdots a_{p+s+r-1}$ is a $7/4^+$-power
with period $s$ starting at position $3q$.

If $p \equiv 2 \pmod3$, let $p=3q+2$. In order to determine the two
elements preceding $a_{3q+2}$, we look at the triple succeeding
$a_{3q+2}$, namely $a_{3q+3}a_{3q+4}a_{3q+5}$.  By
Lemma~\ref{permutation}, $a_{3q}a_{3q+1}a_{3q+2}$ is some permutation
of $a_{3q+3}a_{3q+4}a_{3q+5}.$ We must have either $a_{3q+2} =
a_{3q+4}$ or $a_{3q+2} = a_{3q+5}$ (for otherwise we would have a
square, contrary to Theorem~\ref{squarefree}).  Without loss of
generality, suppose $a_{3q+2} = a_{3q+4}$; we will determine the
values of $a_{3q}$ and $a_{3q+1}$.  If $a_{3q+1} = a_{3q+3}$, then
$a_{3q+1}a_{3q+2}a_{3q+3}a_{3q+4}$ is a square, contrary to
Theorem~\ref{squarefree}, so it must be that $a_{3q+1} = a_{3q+5}$,
which implies $a_{3q} = a_{3q+3}$.  In other words, $a_{3q}a_{3q+1}$
is uniquely determined by $a_{3q+2}a_{3q+3}a_{3q+4}a_{3q+5}$.
Similarly, $a_{3q+s}a_{3q+s+1}$ is uniquely determined by
$a_{3q+s+2}a_{3q+s+3}a_{3q+s+4}a_{3q+s+5}$.  We deduce that
$a_{p-2}a_p \cdots a_{p+s+r-1}$ is a $7/4^+$-power with period $s$
starting at position $3q$.

\begin{case}
$s < 14$ and $s\not\equiv 0\pmod3$
\end{case}
As $\varphi^{n}(2)$ is $7/4^{+}$-power-free by the induction
hypothesis, it is not hard to see that $\sigma(\varphi^{n}(2))$ and
$\rho(\varphi^{n}(2))$ are also $7/4^{+}$-power-free.  Therefore, if
$\varphi^{n+1}(2)$ contains a $7/4^{+}$-power, it must be a factor of
$\sigma(\varphi^{n}(2))\varphi^{n}(2)$ or
$\varphi^{n}(2)\rho(\varphi^{n}(2))$.  Note that here we only consider
$n \geq 3$, as the smaller values of $n$ are dealt with in our base
cases, and $|\varphi^{n}(2)| \geq 27$, so if $\varphi^{n+1}(2)$
contains a $7/4^{+}$-power it cannot extend from
$\sigma(\varphi^{n}(2))$ over into $\rho(\varphi^{n}(2))$.
Furthermore, if $s<14$, then we may assume that the entire repetition
$xyx$ has length at most $\lceil 13(7/4) \rceil = 23$.

Observe that for even $n$, $\varphi^{n}(2)$ has the form
$$213 123 132 123 213 231 321 231 \cdots 312 321 312 132 123 213 123
132,$$
so 
$$\sigma(\varphi^{n}(2))\varphi^{n}(2) = \cdots 321 312 321 231 213
123 213 231 \cdot 213 123 132 123 213 231 321 231 \cdots$$
and
$$\varphi^{n}(2)\rho(\varphi^{n}(2)) = \cdots 312 321 312 132 123 213
123 132 \cdot 312 132 123 132 312 321 231 321\cdots,$$
 where the single dots represent the boundaries between
 $\sigma(\varphi^{n}(2))$, $\varphi^n(2)$ and $\rho(\varphi^{n}(2))$.

Similarly, for odd $n$, $\varphi^{n}(2)$ has the form
$$123 213 231 213 123 132 312 132 \cdots 213 231 213 123 132 312 132
123,$$
so 
$$\sigma(\varphi^{n}(2))\varphi^{n}(2) =  \cdots 123 132 123 213 231
321 231 213 \cdot 123 213 231 213 123 132 312 132 \cdots$$
and
$$\varphi^{n}(2)\rho(\varphi^{n}(2)) = \cdots 213 231 213 123 132 312 132 123 \cdot 132 312 321 312 132 123 213 123 \cdots.$$

It suffices to check the factors of length at most $23$ crossing the
indicated boundaries; by inspection we are able to conclude that
$\varphi^{n+1}(2)$ is $7/4^{+}$-power-free for $s < 14$.

\begin{case}
$s \geq 14$ and $s\not\equiv 0\pmod3$
\end{case}
This case is further divided into six subcases.  We will prove this
case for $p \equiv 1\pmod 3$, where $p$ is the starting index of our
$7/4^{+}$-power and $s \equiv 2\pmod 3$.  The proofs for the other five
cases are similar, all involving use of Lemma~\ref{4tuple}.  Suppose
towards a contradiction that $\varphi^{n+1}(2)$ contains a
$7/4^{+}$-power of the form $xyx$, where $|x| + |y| = s$, $|x| = r$
and $s \equiv 2\pmod 3$. We can write this as $a_{3q+1}a_{3q+2} \cdots
a_{3q+s}a_{3q+s+1}a_{3q+s+2} \cdots a_{3q+s+r}$, where $a_{3q+i} =
a_{3q+s+i}$ for $1 \leq i \leq r$.  Now consider the triples
$a_{3q}a_{3q+1}a_{3q+2}$ and $a_{3q+s+1}a_{3q+s+2}a_{3q+s+3}$.  As
$a_{3q+1} = a_{3q+s+1}$ and $a_{3q+2} = a_{3q+s+2}$ and both
$a_{3q}a_{3q+1}a_{3q+2}$ and $a_{3q+s+1}a_{3q+s+2}a_{3q+s+3}$ are
permutations of $123$, it must be that $a_{3q} = a_{3q+s+3}$.
Continuing in this manner we see that $a_{3q} = a_{3(q+1)} = \cdots =
a_{3(q+j)}$, where $j \leq r$ is maximal and $j \equiv 0\pmod 3$.
This is easy to see using the diagram below, where the lines signify
that the terms are equivalent.
\begin{equation*}
\xymatrix@R20pt@C0pt{ a_{3q} \ar@{-}[drrr] & a_{3q+1} \ar@{-}[d] & a_{3q+2} \ar@{-}[d] & a_{3q+3}\ar@{-}[d]\ar@{-}[drrr] & a_{3q+4} \ar@{-}[d] & a_{3q+5} \ar@{-}[d] & a_{3q+6}\ar@{-}[d] & \cdots & a_{3q+r-2}\ar@{-}[d]& a_{3q+r-1}\ar@{-}[d] & a_{3q+r}\ar@{-}[d]\\
& a_{3q+s+1} &  a_{3q+s+2} & a_{3q+s+3} & a_{3q+s+4} & a_{3q+s+5} & a_{3q+s+6} & \cdots & a_{3q+s+r-2} & a_{3q+s+r-1} & a_{3q+s+r}}
\end{equation*}
However, as $s \geq 14$ it follows that $r \geq 11$ and that $j \geq
3$, so we always have $a_{3q} = a_{3(q+1)} = a_{3(q+2)} = a_{3(q+3)}$,
in contradiction to Lemma~\ref{4tuple}.

\end{proof}

\section{Conclusion}

Kurosaki also gives an alternate definition of his sequence.  In this
other formulation, the $n$-th term of the sequence is computed by a
finite automaton that reads the \emph{balanced ternary} representation
of $n$.  This representation is a base-$3$ representation with digit
set $\{-1,0,1\}$.  Kurosaki says that his sequence can be viewed as an
analogue of the classical Thue--Morse sequence for the balanced ternary
numeration system.  His sequence certainly seems to have many
interesting properties and perhaps should be better known.

\bigskip
\hrule
\bigskip
\noindent 2010 {\it Mathematics Subject Classification}: Primary
68R15.

\noindent \emph{Keywords:} repetitions, squarefree word, $7/4^+$-powers

\end{document}